\documentclass[12pt]{amsart}
\textwidth=15truecm
\textheight=21truecm
\hoffset=-1.2truecm

\usepackage{amssymb}
\newtheorem{theorem}{Theorem}[section]
\newtheorem{lemma}[theorem]{Lemma}

\theoremstyle{definition}

\newtheorem{examp}[theorem]{Example}

\theoremstyle{remark}

\numberwithin{equation}{section}

\newenvironment{example}{\begin{examp}\rm}{\diams\end{examp}}
\newcommand{\diams}{\unskip\nobreak\hfil\penalty50%
\hskip1em\hbox{}\nobreak\hfil%
$\diamondsuit$\parfillskip=0pt\finalhyphendemerits=0}

\newcommand{\sn}{\par\smallskip\noindent}

\newcommand{\bn}{\par\bigskip\noindent}
\newcommand{\pars}{\par\smallskip}
\newcommand{\parm}{\par\medskip}
\newcommand{\parb}{\par\bigskip}

\newcommand{\chara}{\mbox{\rm char}\,}
\newcommand{\trdeg}{\mbox{\rm trdeg}\,}

%
%

\newcommand{\F}{\mathbb F}
\newcommand{\Fp}{\F_p}
%
%
\begin{document}

\title[Artin-Schreier defect extensions]{Correction and notes to the paper ``A classification of
Artin-Schreier defect extensions and characterizations of defectless fields''}

\author{Franz-Viktor~Kuhlmann}
\thanks{The work on this article was partially supported by a Polish Opus grant 2017/25/B/ST1/01815.\\
The author wishes to thank Anna Blaszczok for useful suggestions and careful proofreading.}
\address{Institute of Mathematics, University of Szczecin, ul.~Wielkopolska 15
70-451 Szczecin, Poland}
\email{fvk@usz.edu.pl}
\subjclass[2010]{Primary 12J10, 13A18; Secondary 12J25, 12L12, 14B05.}

\date{December 5, 2018}

\begin{abstract}
We correct a mistake in a lemma in the paper cited in the title and show that it did not affect any of the other results of the paper. To this end we prove results on linearly disjoint field extensions that do not seem to be commonly known. We give an example to show that a separability assumption in one of these results cannot be dropped (doing so had led to the mistake). Further, we discuss recent generalizations of the original classification of defect extensions.
\end{abstract}

\maketitle

%
%
\section{Introduction}
In the paper \cite{[Ku6]} the author introduced a classification of Artin-Schreier defect extensions. Defect extensions 
of a valued field $(K,v)$ can only appear when the characteristic of the residue field $Kv$ is positive. They constitute 
a major obstacle to the solution of the following open problems in positive characteristic:
\sn
1) local uniformization (the local form of resolution of singularities) in arbitrary dimension,
\sn
2) decidability of the field $\F_q((t))$ of Laurent series over any finite field $\F_q$, and of its perfect hull.
\sn
Both problems are closely connected with the structure theory of valued function fields of positive characteristic $p$.

Since the classification was introduced, several indications have been found that one of the two types of defects is not 
as harmful as the other. But in \cite{[Ku6]} it was only introduced for valued fields $(K,v)$ of equal positive 
characteristic (i.e., $\chara K=\chara Kv=p>0$). Recently, it was extended in \cite{[BK2]} to all defect extensions of 
prime degree, including the case of valued fields $(K,v)$ of mixed characteristic (i.e., $\chara K=0$, $\chara Kv=p>0$).
In the process of generalizing results to the mixed characteristic case (see Section 4 of \cite{[BK2]}), a mistake was 
found in the proof of Lemma~4.12 of \cite{[Ku6]}. The following claim had been stated without a reference (with a 
slightly different notation):\sn
Claim: {\it If a field $K$ is relatively algebraically closed in an extension field $F$ and $L$ is an algebraic extension 
of $K$ linearly disjoint from $F$ over $K$, then $L$ is relatively algebraically closed in $L.F$.}\sn
(Here, the compositum $L.F$ of $L$ and $F$ is taken in a fixed algebraic closure of $F$). But this claim is not 
true in general if $L|K$ is not separable. We show this by Example~\ref{ex} below. It is worth mentioning that this 
example was implicitly used by F.~Delon in \cite{D} to show that an algebraically maximal valued field is not 
necessarily defectless; this is worked out in detail in Example~3.25 of \cite{Kdef}. For the definitions of these notions and others used but not explained in these notes, and for further background, see \cite{Kdef,[Ku6],[BK2]}.

A correct version of the above claim reads as follows:\sn
{\it If a field $K$ is relatively separable-algebraically closed in an extension field $F$ and $L$ is an algebraic 
extension of $K$, then $L$ is relatively separable-algebraically closed in $L.F$.}\sn
We prove this assertion in Lemma~\ref{racsimp} in Section~\ref{sectl}. We prove more than this, in order to clarify the 
situation, but also because these results are hard to find in the literature.

\pars
In Section~\ref{sectl17} we state and prove a corrected version of the faulty Lemma~4.12. Its statement is slightly 
weaker than in the original version, as we only obtain that $K$ is relatively separable-algebraically closed in $M_w\,$.
But this suffices for the proof of the crucial Proposition~4.13 of \cite{[Ku6]}. 

\parm
Finally, let us mention that one purpose of introducing the classification of defect extensions was to prove 
Theorem~1.2 of \cite{[Ku6]}, which states: \sn
{\it A valued field of positive characteristic is henselian and defectless
if and only if it is separable-algebraically maximal and inseparably defectless.}\sn
This fact in turn was used in \cite{[K1]} to construct henselian defectless fields for a crucial example. It was hoped 
that the generalization of the classification to the mixed characteristic case would result in the proof of some analogue 
of Theorem~1.2 for this case. Unfortunately, so far we were only able to prove a partial analogue (Theorem~1.7 of
\cite{[BK2]}). The problem is that it is still not entirely clear what the analogue of purely inseparable defect 
extensions may be in mixed characteristic.

\bn
\section{A lemma about linearly disjoint extensions of fields}                           \label{sectl}
\begin{lemma}                               \label{racsimp}
Let $F|K$ be an arbitrary field extension and $L|K$ an algebraic extension.
\sn
1) Assume that $K$ is relatively algebraically closed in $F$ and $b$ is algebraic over $K$, or that 
$K$ is relatively separable-algebraically closed in $F$ and $b$ is separable-algebraic over $K$. Then $F$ and $K(b)$ 
are linearly disjoint over $K$.
\sn
2) Assume that $K$ is relatively separable-algebraically closed in $F$ and $L|K$ is separable-algebraic. Then $F$ and $L$
are linearly disjoint over $K$.
\sn
3) Assume that $K$ is relatively algebraically closed in $F$ and $L|K$ is separable-algebraic. Then $L$ is relatively 
algebraically closed in $L.F\,$.
\sn
4) Assume that $K$ is relatively separable-algebraically closed in $F$ and $L|K$ is algebraic. Then $L$ is relatively 
separable-algebraically closed in $L.F\,$.
\end{lemma}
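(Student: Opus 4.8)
The plan is to factor the extension as $K\subseteq K_s\subseteq L$, where $K_s$ is the separable-algebraic closure of $K$ in $L$; then $K_s|K$ is separable-algebraic and $L|K_s$ is purely inseparable, and I would treat these two layers separately. For the first layer I would establish the separable analogue of part 3): if $K$ is relatively separable-algebraically closed in $F$ and $K_s|K$ is separable-algebraic, then $K_s$ is relatively separable-algebraically closed in $K_s.F$. Applying this to the second layer with $(K_s,K_s.F)$ in place of $(K,F)$ and the purely inseparable extension $L|K_s$, and using $L.(K_s.F)=L.F$, would then yield the lemma. So everything reduces to (i) the separable case and (ii) the purely inseparable case.

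For case (i) I would work inside an algebraic closure $\Omega$ of $L.F$ and let $K\sep$ be the separable-algebraic closure of $K$ in $\Omega$. Because $K_s|K$ is separable, $K_s\subseteq K\sep$ and $K\sep$ is also the separable-algebraic closure of $K_s$, so the elements of $K_s.F$ that are separable-algebraic over $K_s$ are exactly those in $(K_s.F)\cap K\sep$. By part 2) applied to the separable-algebraic extension $K\sep|K$, the fields $F$ and $K\sep$ are linearly disjoint over $K$; hence a $K$-basis of $K\sep$ chosen to extend a $K$-basis of $K_s$ remains $F$-linearly independent in the compositum. Expanding a given $x\in(K_s.F)\cap K\sep$ in this basis in two ways---with coefficients in $F$, using $x\in K_s.F=F\cdot K_s$, and with coefficients in $K$, using $x\in K\sep$---and comparing, the $F$-independence forces the coefficients to lie in $K$ and to be supported on the $K_s$-part, so $x\in K_s$. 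This settles case (i).

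Case (ii) is the heart of the argument; here $\chara=p>0$ and I would use Galois descent. Writing $G:=\Gal(K\sep|K)$: since purely inseparable and separable-algebraic extensions are linearly disjoint, $L$ and $K\sep$ are linearly disjoint over $K$ with $L\cap K\sep=K$, so $L.K\sep|L$ is Galois with group $G$ (via restriction to $K\sep$) and fixed field $L$; and because $L|K$ is purely inseparable one has $L\sep=L.K\sep$, so an element $x\in L.F$ is separable over $L$ exactly when $x\in L.K\sep$. By part 2), likewise $F$ and $K\sep$ are linearly disjoint over $K$ with $F\cap K\sep=K$, so $F.K\sep|F$ is Galois with group $G$. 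The decisive step is to lift each $\sigma\in G$, acting on $F.K\sep$ while fixing $F$, across the purely inseparable extension $L.F.K\sep\,|\,F.K\sep$ to an automorphism $\hat\sigma$ of $L.F.K\sep$: automorphisms extend uniquely along purely inseparable extensions, and since $\hat\sigma$ fixes $K\subseteq F$ while $L|K$ is purely inseparable, $\hat\sigma$ must fix $L$, hence all of $L.F$. Then for $x\in L.F\cap L.K\sep$ we obtain $x=\hat\sigma(x)=\sigma(x)$---the right-hand equality because $\hat\sigma$ fixes $L$ and acts as $\sigma$ on $K\sep$, i.e.\ as the Galois automorphism $\sigma$ of $L.K\sep$---and since this holds for every $\sigma\in G$ we get $x\in(L.K\sep)^{G}=L$.

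The main obstacle is exactly this lifting step, and it is where separability is indispensable. The automorphisms $\hat\sigma$ must simultaneously realize the full group $G$ on $K\sep$ and fix $L.F$; this is available only because $x$, being separable over $L$, is confined to the Galois-controlled field $L.K\sep$. For an element of $L.F$ that is merely algebraic (and inseparable) over $L$ there is no reason for it to lie in $L.K\sep$, nor for the $\hat\sigma$ to fix it, and indeed the stronger statement obtained by replacing ``separable-algebraically closed'' with ``algebraically closed'' fails---this is the content of Example~\ref{ex}. A secondary technical point to handle carefully is the passage to the infinite extension $K\sep$ in the two applications of part 2), which I would reduce to finite separable subextensions.
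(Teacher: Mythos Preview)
Your argument for part 4) is correct, but it follows a genuinely different route from the paper's.

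Both proofs split $L|K$ into a separable layer and a purely inseparable layer, but they handle each layer differently. For the separable layer, the paper first performs a preliminary reduction: it replaces $K$ by its relative algebraic closure $K'$ in $F$ (a purely inseparable extension of $K$), so that it may assume $K$ is relatively \emph{algebraically} closed in $F$ and then invoke part 3) directly to conclude that $L_0$ is relatively algebraically closed in $L_0.F$. You avoid this reduction and instead prove from scratch the weaker statement actually needed (that $K_s$ is relatively \emph{separable}-algebraically closed in $K_s.F$), via linear disjointness of $F$ and $K\sep$ and a basis comparison. This is a slight strengthening over the paper's route, since it works directly under the hypothesis of part 4).

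For the purely inseparable layer, the two arguments diverge more substantially. The paper argues by contradiction using only linear disjointness: a nontrivial separable extension of $L$ inside $L.F$ would descend, via the separable/purely inseparable splitting over $L_0$, to a nontrivial separable extension $L_1|L_0$ with $L_1\subseteq L.F$; by part 2) (applied with $L_0$ relatively algebraically closed in $L_0.F$) one gets $[L_1.F:L_0.F]=[L_1:L_0]>1$, so $L_1.F|L_0.F$ is a nontrivial separable subextension of the purely inseparable extension $L.F|L_0.F$, a contradiction. You instead run a Galois descent: each $\sigma\in\Gal(K\sep|K)$, viewed as an automorphism of $F.K\sep$ over $F$, extends uniquely across the purely inseparable extension $L.F.K\sep|F.K\sep$, necessarily fixing $L$ (hence all of $L.F$); thus any $x\in (L.F)\cap L.K\sep$ is fixed by the full Galois group of $L.K\sep|L$ and lies in $L$. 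Both arguments are valid; the paper's is more elementary (it never leaves the realm of linear disjointness), while yours is more structural and makes the role of separability explicit through the Galois action.
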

\begin{proof}
1): Take an algebraic extension $K(b)|K$. The minimal polynomial $f\in F[X]$ of $b$ over $F$ is a divisor of the
minimal polynomial of $b$ over $K$, so all roots of $f$ are algebraic over $K$ and so are the coefficients of $f$
since they are symmetric functions in these roots. If $K$ is assumed to be relatively algebraically closed in $F$, it 
follows that $f\in K[X]$. If in addition $b$ is separable over $K$, then also the coefficients of $f$ are separable 
over $K$ and it suffices to assume that $K$ is relatively separable-algebraically closed in $F$ to obtain that 
$f\in K[X]$. In both cases, $f$ is also the minimal polynomial of $b$ over $K$. Thus,
$[F(b):F]=[K(b):K]$, showing that $F$ and $K(b)$ are linearly disjoint over $K$.
\sn
2) Now let $L|K$ be a separable-algebraic extension. Then $L$ is a union of 
simple subextensions of $L|K$; if $K$ is relatively separable-algebraically closed in $F$, then by part 1), these 
are linearly disjoint from $F$ over $K$. It then follows that $L$ itself is linearly disjoint from $F$ over $K$. 
\sn
3) Assume that $K$ is relatively algebraically closed in $F$ and $L|K$ is separable-algebraic. Then also 
$L.F|F$ is separable algebraic (since the minimal polynomial of any $b\in L$ over $F$ is a divisor of its  
minimal polynomial over $K$).

Let $a\in L.F$ be algebraic over $L$; hence, $a$ is also algebraic over $K$, and by what we have just shown, it is 
separable-algebraic over $F$. By part a), the minimal polynomial of $a$ over $F$ coincides with that over $K$, so we 
know that $a$ is separable-algebraic over $K$. Consequently, $L(a)|K$ is a separable-algebraic extension. From part 2) 
we infer that $L(a)$ is linearly disjoint from $F$ over $K$. By \cite[Chapter VIII, Proposition 3.1]{L},
$L(a)$ is linearly disjoint from $L.F$ over $L$. In particular, $a\in L.F$ implies $a\in L$. This proves that $L$ is
relatively algebraically closed in $L.F\,$.
\sn
4) Assume that $K$ is relatively separable-algebraically closed in $F$ and $L|K$ is algebraic. If $K'$ denotes the 
relative algebraic closure of $K$ in $F$, then $K'|K$ is purely inseparable, and consequently, the same is true for the 
algebraic subextension $L.K'|L$ of $L.F|L$. Therefore, if we are able to show that $L.K'$ is relatively
separable-algebraically closed in $L.F\,$, then the same holds for $L$. We may thus assume from the start that 
$K$ is relatively algebraically closed in $F$, and we need to show that $L$ is relatively 
separable-algebraically closed in $L.F\,$.

Let $L_0|K$ be the maximal separable subextension of $L|K$, so $L|L_0$ is purely inseparable. By part 3), $L_0$ is 
relatively algebraically closed in $L_0.F\,$. Suppose that $L$ is not relatively separable-algebraically closed in 
$L.F\,$. Then the relative algebraic closure of $L$ in $L.F$ contains a nontrivial separable-algebraic subextension 
$L_1$ of $L_0\,$. By part 2), $L_1$ is linearly disjoint from $L_0.F$ over $L_0\,$. This shows that $L_1.F|L_0.F$ is
a nontrivial separable subextension of $L.F|L_0.F\,$. But as $L|L_0$ is purely inseparable, so is $L.F|L_0.F\,$. This contradiction shows that $L$ is relatively separable-algebraically closed in $L.F\,$.
\end{proof}

\parb
Assertion 2) of the lemma fails when $L|K$ is algebraic but neither separable nor simple, even when $K$ is relatively 
algebraically closed in $F$. Likewise, assertion 3) fails
when $L|K$ is algebraic but not separable. This will be shown in the following example.
\begin{example}                                     \label{ex}
We take elements $t,x,y$ which are algebraically independent over $\Fp$. We choose any prime $p$, set 
\[
F\>:=\>\Fp(t,x,y)
\]
and define
\[
s\>:=\>x^p+ty^p\;\mbox{\ \ and\ \ }\; K\>:=\>\Fp(t,s)\>.
\]
Then $s$ is transcendental over $\F_p(t)$, so $K$ has $p$-degree $2$, that is, $[K:K^p]=p^2$. 

\pars
We prove that $K$ is relatively algebraically closed in $F$. Take $b\in F$ algebraic over $K$. The
element $b^p$ is algebraic over $K$ and lies in $F^p=\Fp(t^p,x^p,y^p)$ and thus also in $K(x)= \Fp(t,x,y^p)$. Since 
$\trdeg\Fp(t,x,y^p)|\Fp=3$ while $\trdeg K|\F_p=2$, we see that $x$ is transcendental over $K$. Therefore, 
$K$ is relatively algebraically closed in $K(x)$ and thus, $b^p\in K$. Consequently, $b\in K^{1/p}= 
\Fp(t^{1/p},s^{1/p})$. Write
\[
b\;=\; r_0+r_1s^{\frac{1}{p}}+\ldots+r_{p-1}s^{\frac{p-1}{p}}
\mbox{ \ \ \ with \ } r_i\in \Fp(t^{1/p},s)=K(t^{1/p})\;.
\]
Since $s^{1/p}=x+t^{1/p}y$, we have that
\[
b\;=\; r_0 \,+\, r_1x +\ldots+ r_{p-1}x^{p-1}
\,+\ldots+\, t^{1/p}r_1y +\ldots+ t^{(p-1)/p}r_{p-1}y^{p-1}
\]
(in the middle, we have omitted the summands in which both $x$ and $y$
appear). Since $x,y$ are algebraically independent over $\Fp$, the
$p$-degree of $\F_p(x,y)$ is $2$, and the elements $x^i y^j$, $0\leq
i<p$, $0\leq j<p$, form a basis of $\Fp(x,y)|\Fp(x^p,y^p)$. Since $t^{1/p}$ is transcendental over $\Fp(x^p,y^p)$, we 
know that $\Fp(x,y)$ is linearly disjoint from $\Fp(t^{1/p},x^p,y^p)$ and hence also from $\Fp(t,x^p,y^p)$
over $\Fp(x^p,y^p)$. This shows that the elements $x^i y^j$ also form a basis of $F|\Fp(t,x^p,y^p)$ and
are still $\Fp(t^{1/p},x^p,y^p)$-linearly independent. Hence, $b$ can
also be written as a linear combination of these elements with
coefficients in $\Fp(t,x^p,y^p)$, and this must coincide with the above
$\Fp(t^{1/p},x^p,y^p)$-linear combination which represents $b$. That is,
all coefficients $r_i\,$ {\it and\/} $\,t^{i/p}r_i$, $1\leq i<p$, are in
$\Fp(t,x^p,y^p)$. Since $t^{i/p}\notin\Fp(t,x^p,y^p)$, this is
impossible unless they are zero. It follows that $b=r_0\in K(t^{1/p})$.
Assume that $b\notin K$. Then $[K(b):K]=p$ and thus, $K(b)=K(t^{1/p})$
since also $[K(t^{1/p}):K]=p$. But then $t^{1/p}\in K(b)\subset F$, a
contradiction. This proves that $K$ is relatively algebraically closed in $F$.

\pars
We show that
\[
K(t^{1/p},s^{1/p})
\]
is not linearly disjoint from $F$ over $K$. Indeed, $s^{1/p}=x+t^{1/p}y\in K(t^{1/p},x,y)$, which implies that 
\[
[K(t^{1/p},s^{1/p}).F:F]\>=\>[F(t^{1/p}):F]\>=\>p\><\>p^2\>=\>[K(t^{1/p},s^{1/p}):K]\>.
\]

Further, we see that while $K(t^{1/p})$ is linearly disjoint from $F$ over $K$, it is not relatively algebraically 
closed in $F(t^{1/p})=K(t^{1/p}).F$ since $s^{1/p}\in F(t^{1/p})\setminus K(t^{1/p})$. We have shown that the 
separability condition on $L|K$ in parts 2) and 3) of Lemma~\ref{racsimp} is necessary.
\end{example}

\bn
\section{Corrected version of Lemma~4.12}                           \label{sectl17}
We consider a valued field $(K_0,v)$.
\begin{lemma}                            \label{l7}
Assume that for every coarsening $w$ of $v$ (including $v$ itself),
$K_0$ admits a maximal immediate extension $(N_w|K_0,w)$ such that $K_0$ is relatively separable-algebraically
closed in $N_w$. If $(K|K_0,v)$ is a finite and defectless extension,
then for every coarsening $w$ of~$v$ (including $v$ itself), $(M_w,w) =
(N_w.K,w)$ is a maximal immediate extension of $(K,w)$ such that $K$ is
relatively separable-algebraically closed in $M_w$.
\end{lemma}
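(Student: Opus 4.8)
The plan is to fix a coarsening $w$ of $v$ and establish four things in turn: that $(K_0,w)$ is henselian; that $(M_w,w)$ is immediate over $(K,w)$; that $(M_w,w)$ is maximal; and finally that $K$ is relatively separable-algebraically closed in $M_w$.

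First I would observe that the hypotheses force $(K_0,w)$ to be henselian. A maximal immediate extension of $K_0$ admits no proper immediate extension (any such extension would again be immediate over $K_0$), so $N_w$ is henselian, since otherwise its henselization would be a proper immediate extension. The henselization $(K_0,w)^h$ is separable-algebraic over $K_0$ and embeds over $(K_0,w)$ into the henselian field $N_w$; as $K_0$ is relatively separable-algebraically closed in $N_w$, this image must equal $K_0$, whence $(K_0,w)^h=K_0$. In particular $w$ extends uniquely to $K$ and to $M_w=N_w.K$, and by multiplicativity of the defect along the coarsening \cite{Kdef} the extension $(K|K_0,w)$ inherits defectlessness from $(K|K_0,v)$, so that $[K:K_0]=e(K|K_0,w)\,f(K|K_0,w)$.

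For immediacy I would run a squeeze on the fundamental inequality. Set $e'=e(M_w|N_w)$, $f'=f(M_w|N_w)$, $e=e(K|K_0,w)$, $f=f(K|K_0,w)$. As $N_w|K_0$ is immediate, $wN_w=wK_0$ and $N_ww=K_0w$; combined with $wK_0\subseteq wK\subseteq wM_w$ and $K_0w\subseteq Kw\subseteq M_ww$ this gives $e'\ge e$ and $f'\ge f$. On the other hand $M_w=N_w.K$ forces $[M_w:N_w]\le[K:K_0]$, and the fundamental inequality over the henselian field $N_w$ gives $e'f'\le[M_w:N_w]$. Chaining these, $ef\le e'f'\le[M_w:N_w]\le[K:K_0]=ef$, so every inequality is an equality and in particular $e'=e$, $f'=f$. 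Since $wK_0\subseteq wK\subseteq wM_w$ with $[wM_w:wK_0]=[wK:wK_0]$, this yields $wM_w=wK$, and likewise $M_ww=Kw$; hence $(M_w|K,w)$ is immediate. Maximality then follows from the standard fact that a finite extension of a maximal valued field is maximal \cite{Kdef}, applied to the finite extension $M_w|N_w$ of the maximal field $N_w$; being maximal and immediate over $(K,w)$, the field $(M_w,w)$ is a maximal immediate extension of $(K,w)$. The last assertion is then immediate from part 4) of Lemma~\ref{racsimp} with $F=N_w$ and $L=K$: since $K_0$ is relatively separable-algebraically closed in $N_w$ and $K|K_0$ is algebraic, $K$ is relatively separable-algebraically closed in $K.N_w=M_w$.

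The main obstacle to watch, and the source of the original error, is the behaviour of the inseparable part of $K|K_0$. It is tempting to extract linear disjointness of $N_w$ and $K$ from part 2) of Lemma~\ref{racsimp} and then claim that $K$ is relatively \emph{algebraically} closed in $M_w$, but part 2) applies only to separable extensions and the stronger conclusion fails, as Example~\ref{ex} shows. The squeeze above sidesteps this entirely: it never invokes separability, and the defectlessness hypothesis supplies what separability cannot. Indeed the equality $[M_w:N_w]=[K:K_0]$ produced by the squeeze already records that no purely inseparable generator of $K$ can lie in $N_w$, since such an element would have its value in $wK_0$ and its residue in $K_0w$ and would therefore force a defect. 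The discipline the proof requires is thus to claim no more than relative separable-algebraic closedness, which is exactly what part 4) delivers and all that the corrected statement needs.
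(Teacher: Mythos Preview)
Your proof is correct and follows essentially the same route as the paper: henselianity of $(K_0,w)$ from relative separable-algebraic closedness in the henselian (maximal) field $N_w$; defectlessness of $(K|K_0,w)$ inherited from $(K|K_0,v)$; immediacy of $(M_w|K,w)$; maximality of $M_w$ as a finite extension of a maximal field; and part~4) of Lemma~\ref{racsimp} for the final assertion. The only difference is cosmetic: where the paper cites Lemma~2.5 of \cite{[Ku6]} for the immediacy of $(N_w.K|K,w)$, you reprove that step by the squeeze $ef\le e'f'\le[M_w:N_w]\le[K:K_0]=ef$, which is precisely the content of that lemma and in fact yields the linear disjointness of $N_w$ and $K$ over $K_0$ as a by-product.
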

\begin{proof}
Since $(K|K_0,v)$ is defectless by hypothesis, the same is true for the
extension $(K|K_0,w)$ by Lemma~2.4 of \cite{[Ku6]}. We note that $(K_0,w)$ is
henselian since it is assumed to be separable-algebraically closed in
the henselian field $(N_w,w)$. So we may apply Lemma~2.5 of \cite{[Ku6]}: since
$(N_w|K_0,w)$ is immediate and $(K|K_0,w)$ is defectless, $(N_w.K|K,w)$ is immediate. 
By part 4) of Lemma~\ref{racsimp}, $K$ is relatively separable-algebraically closed in $N_w.K$. On the
other hand, $(M_w,w)=(N_w.K,w)$ is a maximal field, being a finite extension of a maximal field.
\end{proof}

\end{document}